\documentclass[12pt,a4paper]{article}
\pdfoutput=1
\usepackage{amsthm,bm,amsmath,amssymb,epsfig,multicol,multirow}
\usepackage{natbib}
\usepackage{graphics,rotating,graphicx}
\usepackage{array,multirow,caption2}
\usepackage{amssymb}
\usepackage{amsthm,url}
\usepackage{bm,amsmath,multicol}
\usepackage{booktabs}

\addtolength{\oddsidemargin}{-.5in}
\addtolength{\evensidemargin}{-.65in}
\addtolength{\textwidth}{1.15in} \addtolength{\topmargin}{-.25in}
\addtolength{\textheight}{0.5in}

\newcommand{\bB}{\mathbf{B}}
\newcommand{\bW}{\mathbf{W}}
\newcommand{\bS}{\mathbf{S}}
\newcommand{\bD}{\mathbf{D}}

\newcommand{\bSigma}{\bm{\Sigma}}
\newcommand{\bmu}{\bm{\mu}}
\newcommand{\bX}{\mathbf{X}}
\newcommand{\bx}{\mathbf{x}}
\newcommand{\bz}{\mathbf{z}}
\newcommand{\xbar}{\overline{\mathbf{x}}}

\newcommand{\be}{\mathbf{e}}
\newcommand{\bv}{\mathbf{v}}

\newcommand{\ehat}{\widehat{\mathbf{e}}}
\newcommand{\phat}{\widehat{p}}
\newcommand{\lambdahat}{\widehat{\lambda}}
\newcommand{\alphahat}{\widehat{\alpha}}
\newcommand{\mB}{\mathcal{B}}
\newcommand{\mS}{\mathcal{S}}
\newcommand{\mT}{\mathcal{T}}
\newcommand{\mC}{\mathcal{C}}

\newcommand{\Cov}{\text{Cov}}
\newcommand{\IF}{\text{IF}}
\newcommand{\SIF}{\text{SIF}}
\newcommand{\EIF}{\text{EIF}}

\newtheorem{lemma}{Lemma}
\newtheorem{condition}{Condition}
\newtheorem{theorem}{Theorem}

\title{IFs for LDA}

\begin{document}

\noindent{\Large \textbf{Influence functions for Linear Discriminant Analysis: Sensitivity analysis and efficient influence diagnostics}}



\vspace{0.5cm}

\noindent LUKE A. PRENDERGAST and JODIE A. SMITH

\noindent \textit{Department of Mathematics and Statistics, La Trobe
University, Melbourne, Australia, 3086}

\vspace{0.5cm}

\noindent\textbf{ABSTRACT.  Whilst influence functions for linear discriminant analysis (LDA) have been found for a single discriminant when dealing with two groups, until now these have not been derived in the setting of a general number of groups.  In this paper we explore the relationship between Sliced Inverse Regression (SIR) and LDA, and exploit this relationship to develop influence functions for LDA from those already derived for SIR.  These influence functions can be used to understand robustness properties of LDA and also to detect influential observations in practice.  We illustrate the usefulness of these via their application to a real data set.}

\vspace{0.5cm}

\noindent \textit{Key words:} dimension reduction; eigen-subspace; influential observations; linear discriminant analysis; sliced inverse regression

\section{Introduction}

With origins dating back to \cite{Fisher1936use}, linear discriminant analysis (LDA) is one of the most widely used statistical tools for supervised classification of subjects into a finite number of sub-populations.  The simplicity of LDA sees it taught in many university programs and its availability within popular statistics computing software ensures that it is a popular method among practitioners.  

Introduced by \cite{LI91}, Sliced Inverse Regression (SIR) is a dimension reduction technique that, under mild conditions and for a general model framework, can be used to reduce the dimension of the given predictor space given.  SIR and LDA are strongly connected with \cite{LI00} noting only a difference in scaling evident. As a result new influence functions for LDA are then also available using results available for SIR \citep[e.g.][]{PR05,PR&SM10}.  Unlike existing influence functions for LDA, these new influence functions are applicable in the setting of multiple directions and are also based on the discriminants themselves and not just the directions used to define them.  More details in this regard will follow later.  Influence functions for methods such as LDA can be used to devlop and assess robust estimators \citep[e.g., see][who consider robust inverse regression]{dong2015robust}. 

In Section 2 we provide some background concepts for both LDA and influence functions before providing influence functions for LDA in Section 3.  An application of sample-based versions to a real data set  are considered in Section 4.  Theoretical derivations including some discussion on the link between SIR and LDA can be found in the Appendix.

\section{Background material and concepts}

In this section we provide some background material for LDA and influence functions that will  be useful later.

\subsection{Linear discriminant analysis}\label{sect:LDA}

Throughout we will use notations consistent with Section 11.7 from \cite{JO&WI01} which concerns Fisher's method for discrimination between several populations.  We assume that there are $g$ populations and we let $\bx_{i1}, \ldots, \bx_{in_i}$ denote $n_i$ $p$-dimensional column vectors observed from the $i$th population.  The sample mean of the $\bx_{ij}$'s within the $i$th population is $\xbar_i$ and the sample mean of all $\bx_{ij}$'s across all populations is $\xbar$.  For use later we consider two estimators of the covariance matrix for the $\bx$'s within the $i$th group.  The first is the empirical estimate given as $\widehat{\bSigma}_i=n_i^{-1}\sum^{n_i}_{j=1}(\bx_{ij}-\xbar_i)(\bx_{ij}-\xbar_i)^\top$ and the second is the usual sample covariance matrix estimator $\bS_i=n_i\widehat{\bSigma}_i/(n_i-1)$.

Variation between the $g$ groups is measured by
\begin{equation}
\widehat\bB=\sum^g_{i=1}n_i(\xbar_i-\xbar)(\xbar_i-\xbar)^\top\label{B}
\end{equation}
and a pooled measure for within groups variation is
\begin{equation}
\widehat\bW = \sum^g_{i=1}(n_i-1)\bS_i=\sum^g_{i=1}n_i\widehat{\bSigma}_i.\label{W}
\end{equation}

Fisher's linear discriminant directions are obtained from the matrix $\widehat\bD=\widehat\bW^{-1}\widehat\bB$ which has maximum rank $g-1$.  Let $s$ denote the rank of $\widehat\bD=\widehat\bW^{-1}\widehat\bB$ and $\lambdahat_1\geq\ldots\geq\lambdahat_s>0$ denote the ordered nonzero eigenvalues corresponding to the eigenvectors $\ehat_1,\ldots,\ehat_s$ of $\widehat\bD$ scaled such that $\ehat_j^\top\bS_p\ehat_j=1$.  Here $\bS_p=\widehat\bW/(n_1+\ldots+n_g-g)$ is the pooled sample variance estimate of the assumed common covariance matrix within each group.

\subsection{The influence function}

Let $F$ denote an arbitrary distribution function for which a parameter, denoted $\theta$, is of interest.  Let $\mT$ denote a statistical functional such that when applied to the distribution function $F$, is $\mT(F)=\theta$.  Further, let $F_n$ denote the empirical distribution function associated with a sample of size $n$ from $F$ so that $F_n\rightarrow F$ for increasing $n$.  Then an estimator of $\theta$ is $\widehat{\theta}=\mT(F_n)$ with $\mT(F_n)\rightarrow \theta$ as $n\rightarrow \infty$.  An example statistical functional is that of the mean.  Here $\mT(F)=\int x dF=\mu$ and $\mT(F_n)=n^{-1}\sum^n_{i=1}x_i=\overline{x}$ (the sample mean).

Let $\Delta_{x_0}$ denote the Dirac distribution function that puts all probability mass at the point $x_0$ (which may be a vector in the multivariate setting or a scalar).  For $x_0$ being the contaminant, the contaminated distribution is
\begin{equation}\label{Fe}
F_\epsilon = (1-\epsilon)F+\epsilon \Delta_{x_0}
\end{equation}
where $\epsilon \in [0,1]$.  When $\epsilon$ is small, we seek to understand by how much a small amount of contamination influences $\mT(F_\epsilon)$ so that it differs from $\mT(F)$.  To study the robustness properties of estimators, \cite{hampel1974influence} introduced the influence function (IF) defined to be
\begin{equation}
    \IF(\mT,F;x_0)=\lim_{\epsilon \downarrow 0}\frac{\mT(F_\epsilon)-\mT(F)}{\epsilon}=\frac{\partial}{\partial\epsilon}\mT(F_\epsilon)\Big|_{\epsilon=0}\label{IF}.
\end{equation}

When thought of as a power series expansion, we have
\begin{equation}
    \mT(F_\epsilon)=\mT(F)+\epsilon \IF(\mT,F;x_0) + O(\epsilon^2)\label{power}
\end{equation}
which helps to conceptualize the relevance of the IF to studying robustness properties.  For example, if $\IF(\mT,F;x_0)=0$ then the introduction of contaminant $x_0$ has little or no influence on the estimator functional.  On the contrary, it is also possible to use the IF to determine the types of $x_0$ that exert large influence (have a large IF).

The IF is not only useful for studying robustness properties as it can also be employed in practice to measure the influence of an observation on a sample estimate.  Let $x_1,\ldots,x_n$ denote a sample.  Then the sample IF (SIF) for the $i$th observation is
\begin{equation}
    \SIF(\mT,F_n;x_i)=(n-1)\left[\mT(F_n)-\mT(F_{n,(i)})\right]
\end{equation}
where $F_{n,(i)}$ is the empirical distribution function for the sample excluding $x_i$.  That is, the SIF is proportional to the difference in estimates with and without the $i$th observation.  The empirical IF (EIF) results directly from a closed form expression for the IF but replaces $x_0$ with $x_i$ and parameters with their estimates.  It is important to note that, for moderate to large $n$,
$$\EIF(\mT,F_n;x_i)\approx \SIF(\mT,F_n;x_i).$$

The above concepts are well articulated by \cite{CR85} who studies the IFs, SIFs and EIFs of principal components.  As an example, consider the covariance matrix estimator and a sample of $p$ dimensional vectors from $F$ denoted $\mathbf{x}_1,\ldots,\mathbf{x}_n$, and let $\mT(F)=\bmu$ and $\mC(F)=\bSigma$ denote the mean vector and covariance matrix.  Then $\IF(\mC,F;x_0)=(\bx_0-\bmu)(\bx_0-\bmu)^\top-\bSigma$ and $\SIF(\mC,F_n;\bx_i)=(n-1)\left[\widehat\bSigma - \widehat\bSigma_{(i)}\right]$ where $\widehat\bSigma=\mC(F_n)$ and $\widehat\bSigma_{(i)}=\mC(F_{n,(i)})$.  Now, using the definition of $\IF(\mC,F;x_0)$, we can obtain the EIF
\begin{equation}\label{EIF.C}
    \EIF(\mC,F_n;\mathbf{x}_i)=(\bx_i-\overline{\bx})(\bx_i-\overline{\bx})^\top-\widehat\bSigma
\end{equation}
where $\overline{\mathbf{x}}$ is the sample mean.  We can verify that $\EIF(\mC,F_n;\mathbf{x}_i)\approx \SIF(\mC,F_n;\mathbf{x}_i)$ by noting that, see e.g. \cite{prendergast2007implications}, $\widehat{\bSigma}_{(i)}=[n/(n - 1)]\widehat{\bSigma}-[n/(n-1)^2](\bx_i-\overline\bx)(\bx_i-\overline\bx)^\top$.  We can use this to show that
\begin{equation}
    \SIF(\mC,F_n;\mathbf{x}_i)=\frac{n}{(n-1)}(\bx_i-\overline{\bx})(\bx_i-\overline{\bx})^\top-\widehat\bSigma
\end{equation}
so that the EIF from \eqref{EIF.C} is approximately equal to the SIF.  This then speaks to the dual purpose in what is to follow.  Firstly, to introduce influence functions that may be used to study the robustness properties of LDA and secondly to create influence diagnostics that can be used in practice.

\section{Influence functions for linear discriminants}

For simplicity in defining functionals, it is convenient to move slightly away from the group-specific indexing considered in the previous section.  Here we let $n=n_1+\ldots+n_g$ denote the total sample size across all groups and let $\bx_j$ denote the $j$th observed $\bx$ from the $n$ observed vectors.  We also let $y_j$ denote the indicator for group membership for $\bx_i$ where $y_j\in \{1,\ldots,g\}$.  Let $\phat_i = n_i/n$ denote the proportion of $\bx$'s in the $i$th group.  Then re-scaled versions of $\bB$ and $\bW$ from \eqref{B} and \eqref{W} can be defined as
\begin{equation}
\widehat\bB_n = \frac{1}{n}\widehat\bB = \sum^g_{i=1}\phat_i(\xbar_i-\xbar)(\xbar_i-\xbar)^\top\;\;\text{and}\;\;\widehat\bW_n = \frac{1}{n}\widehat\bW = \sum^g_{i=1}\phat_i\widehat{\bSigma}_i. \label{Bn_and_Wn}
\end{equation}

Clearly, we can still arrive at the discriminant matrix $\widehat\bD$ from the previous section by
\begin{equation}
\widehat\bD = \widehat\bW_n^{-1} \widehat\bB_n \label{D}
\end{equation}
and our motivation for using this notation becomes clear in the next section.

\subsection{Functionals for the linear discriminant estimators}

In order to define functionals for our discriminant estimators, we need to introduce $F$ - the distribution function for our `population'.  Here the population consists of $g$ non-overlapping subpopulations and we can imagine that these subpopulations have distribution functions $F_1,\ldots,F_g$.  We now consider a random pair $(\bX, Y)\sim F$ which consists of the random covariate vector $\bX$ and random group membership indicator $Y\in \{1,\ldots,g\}$ where $p_i = P(Y = i)$ for $i=1,\ldots,g$.  Further, we define the following:
\begin{equation*}
\bmu_i = E(\bX|Y=i),\; \bmu =  E(\bX) = \sum^g_{i=1}p_i\bmu_i,\;\bSigma_i =  \Cov(\bX|Y=i),\; \bSigma =  \Cov(\bX)
\end{equation*}
for which the empirical estimates for $\bmu_i$, $\bmu$ and $\bSigma_i$ are $\xbar_i$, $\xbar$ and $\widehat{\bSigma}_i$ introduced in Section \ref{sect:LDA}.  This leads us to note that $\widehat\bB_n$ and $\widehat\bW_n$ are the empirical estimates to $\Cov[E(\bX|Y)]$ and $E[\Cov(\bX|Y)]$ respectively, both of which are fundamental to the development of SIR \citep{LI91} and related methods.  The link between LDA and SIR can be found in Chapter 14 of \cite{LI00} and further details are provided in Appendix \ref{sect:LDA_and_SIR}.

Let $\mathcal{B}$ denote the functional for the estimator associated with $\bB_n$ such that $\mathcal{B}(F_n)=\bB_n$ and where $F_n$ denotes the empirical distribution functional for $\{(\bx_1,y_1),\ldots,(\bx_n,y_n)\}$.  Similarly we let $\mathcal{W}$ denote the functional for the estimator associated with $\bW_n$.  From these functionals we can define the functional for the discriminant matrix estimator as
\begin{equation}
\mathcal{D}(F) = \left[\mathcal{W}(F)\right]^{-1}\mathcal{B}(F)
\end{equation}
where $\mathcal{D}(F_n)=\widehat\bD$.  Then let $e_1,\ldots,e_s$, where $s$ is the rank of $D(F)$, be the functionals for the discriminant directions which are scaled eigenvectors of $D(F)$ and where $e_j(F)=\be_j$ $(j=1,\ldots,s)$.  We also let $l_j$ be the eigenvalue functionals with $l_j(F)=\lambda_j$.

\subsection{Existing influence functions}

In the case of two subpopulations, i.e. $g=2$, the rank of $\widehat\bD$ is one and there is a single discriminant direction which is proportional to $\bS^{-1}(\overline\bx_1-\overline\bx_2)$.  In studying robust versions of LDA, \cite{croux2001robust} derive the IF for this discriminant direction as well as for a robust version of the direction that has a bounded IF.  Not long after, \cite{pires2002partial} discussed influence functions more broadly in the setting of subpopulations, and also gave the IF for the single discriminant direction when $g=2$.  In the next section we provide the IFs for the directions for the general case of $g\geq 1$ arising from the eigen-decomposition of $\bD$.  We then turn our attention to an improved IF that is simpler in expression and offers some notable advantages.

\subsection{Influence functions for discriminant directions}

We start by providing the IF for the LDA eigenvalue estimators and discriminant directions.

\begin{theorem}\label{theorem:IFs}
Suppose that $y_0=k$ so that the contaminant, $\bx_0$, is in the $k$th subpopulation.  Then the IF for the $j$th LDA eigenvalue estimator is equal to
\begin{equation*}
    \IF(l_j,F;\bx_0)=z_j^2-(1+\lambda_j)\overline z_{jk}^2
\end{equation*}
where $z_j=\be_j^\top(\bx_0-\bmu)$ and $\overline{z}_{jk}=\be_j^\top(\bx_0-\bmu_k)$.  The IF for the $j$th discriminant direction is of the form
\begin{equation*}
    \IF(e_j,F;\bx_0)=\frac{\IF(l_j,F;\bx_0)}{2\sqrt{1+\lambda_j}}\be_j+\sqrt{1+\lambda_j}\IF(v_j,F;\bx_0)
\end{equation*}
where $\IF(v_j,F;\bx_0)$ is the IF for the $j$th SIR direction, which is given in \eqref{IFvj} of the Appendix, where $\alpha_j=\lambda_j/(1+\lambda_j)$, $w_j=z_j/\sqrt{1+\lambda_j}$, $\overline w_{jk}=\overline z_{jk}/\sqrt{1+\lambda_j}$ and $\bv_j=\be_j/\sqrt{1+\lambda_j}$ for $j=1,\ldots,s$.
\end{theorem}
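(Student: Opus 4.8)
The plan is to derive everything from the correspondence between the LDA matrix $\bD=\bW^{-1}\bB$ and the SIR matrix $\bSigma^{-1}\bB$, whose influence functions are already available. The linchpin is the law of total covariance $\bSigma=\bW+\bB$, since $\bW=E[\Cov(\bX|Y)]$ and $\bB=\Cov[E(\bX|Y)]$. First I would show the two matrices share eigenvectors: if $\bB\be_j=\lambda_j\bW\be_j$, then $\bSigma\be_j=(1+\lambda_j)\bW\be_j$, whence $\bSigma^{-1}\bB\be_j=\alpha_j\be_j$ with $\alpha_j=\lambda_j/(1+\lambda_j)$. Matching the LDA normalization $\be_j^\top\bW\be_j=1$ against the SIR normalization $\bv_j^\top\bSigma\bv_j=1$, and using $\be_j^\top\bSigma\be_j=\be_j^\top\bW\be_j+\be_j^\top\bB\be_j=1+\lambda_j$, forces $\bv_j=\be_j/\sqrt{1+\lambda_j}$. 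I would emphasise that these hold as functional identities at every $F_\epsilon$, not merely at $F$, which is what licenses differentiation and simultaneously fixes the smoothly varying scale and sign of the eigenvectors needed for the IFs to be well defined.

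For the eigenvalue IF, I would invert the spectral map to $\lambda_j=\alpha_j/(1-\alpha_j)$ and apply the chain rule. Since $1-\alpha_j=1/(1+\lambda_j)$, we have $d\lambda_j/d\alpha_j=(1-\alpha_j)^{-2}=(1+\lambda_j)^2$, so that $\IF(l_j,F;\bx_0)=(1+\lambda_j)^2\,\IF(\alpha_j,F;\bx_0)$, where $\IF(\alpha_j,F;\bx_0)$ is the SIR eigenvalue IF. Substituting the latter and rewriting the SIR scores through $w_j=z_j/\sqrt{1+\lambda_j}$ and $\overline w_{jk}=\overline z_{jk}/\sqrt{1+\lambda_j}$ (so $z_j^2=(1+\lambda_j)w_j^2$, etc.) collapses the expression to $z_j^2-(1+\lambda_j)\overline z_{jk}^2$. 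A useful check is Fisher consistency: taking expectations over $(\bX,Y)$ gives $E[z_j^2]=\be_j^\top\bSigma\be_j=1+\lambda_j$ and $E[(1+\lambda_j)\overline z_{jk}^2]=(1+\lambda_j)\,\be_j^\top\bW\be_j=1+\lambda_j$, so $E[\IF(l_j,F;\bX)]=0$ as it should.

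For the direction IF, I would differentiate the functional identity $\be_j=\sqrt{1+l_j}\,\bv_j$ with the product rule. The scale factor $\sqrt{1+l_j}$ contributes $\tfrac12(1+\lambda_j)^{-1/2}\IF(l_j)\,\bv_j$ and the SIR-direction functional contributes $\sqrt{1+\lambda_j}\,\IF(v_j,F;\bx_0)$, so that
\begin{equation*}
\IF(e_j,F;\bx_0)=\frac{\IF(l_j,F;\bx_0)}{2\sqrt{1+\lambda_j}}\,\bv_j+\sqrt{1+\lambda_j}\,\IF(v_j,F;\bx_0),
\end{equation*}
which is the stated decomposition, the leading term involving the $j$th direction (parallel to $\be_j$ via $\bv_j=\be_j/\sqrt{1+\lambda_j}$). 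The SIR-direction IF $\IF(v_j,F;\bx_0)$ is then imported directly from \eqref{IFvj} under the substitutions $\alpha_j=\lambda_j/(1+\lambda_j)$, $w_j=z_j/\sqrt{1+\lambda_j}$, $\overline w_{jk}=\overline z_{jk}/\sqrt{1+\lambda_j}$ and $\bv_j=\be_j/\sqrt{1+\lambda_j}$.

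I expect the main obstacle to be the first paragraph rather than the calculus: establishing that the spectral and normalization relations are genuine functional identities holding throughout a neighbourhood of $F$, with a consistent differentiable choice of eigenvector scale and sign, so that the product and chain rules genuinely apply. Eigenvectors are defined only up to scale and sign, so I would need to argue that the constraints $\be_j^\top\bW\be_j=1$ and $\bv_j^\top\bSigma\bv_j=1$ pin down compatible, $\epsilon$-differentiable families $\be_j(F_\epsilon)$ and $\bv_j(F_\epsilon)$; once that is secured, the remaining steps are bookkeeping built on $\bSigma=\bW+\bB$ and the already-derived SIR influence functions.
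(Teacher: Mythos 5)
Your proposal is correct and follows essentially the same route as the paper: the paper's proof also writes $l_j(F)=a_j(F)/[1-a_j(F)]$, applies the chain rule to get $\IF(l_j,F;\bx_0)=\IF(a_j,F;\bx_0)/(1-\alpha_j)^2$, substitutes the SIR eigenvalue IF from Lemma \ref{lemma:IFalpha}, and obtains the direction result from the scaling link \eqref{eigenvector_link}; your extra material (deriving the link from $\bSigma=\bW+\bB$ and the Fisher-consistency check) is sound but is handled in the paper by Appendix \ref{sect:LDA_and_SIR} and the citation to Li (2000). One point to flag: your product rule on $e_j=\sqrt{1+l_j}\,v_j$ yields a leading term $\tfrac{1}{2}(1+\lambda_j)^{-1/2}\IF(l_j)\,\bv_j=\tfrac{1}{2}(1+\lambda_j)^{-1}\IF(l_j)\,\be_j$, whereas the theorem states $\tfrac{1}{2}(1+\lambda_j)^{-1/2}\IF(l_j)\,\be_j$; these differ by a factor of $\sqrt{1+\lambda_j}$, so your claim that your expression ``is the stated decomposition'' is not literally true. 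Since the product-rule computation is the one that actually follows from the functional identity, this discrepancy points to a typographical slip in the theorem ($\be_j$ for $\bv_j$) rather than an error in your derivation, but you should not have asserted the two expressions agree without checking.
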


While the IF for the eigenvalues is simple, the same cannot be said for the LDA discriminant directions.  The IF for the $j$th direction is unnecessarily complicated and we find it more convenient to leave it in terms of the SIR IF in Theorem \ref{theorem:IFs}.  To understand why this is the case, we note that the IF for the $j$th direction is of the form
$$\IF(e_j,F;\bx_0)=c_1\be_j+\sum^s_{r=1,r\neq j}c_{rj}\be_r + c_2\bSigma^{-1}(\bx_0-\bmu)$$
for $c_1,c_2,c_{rj}\in \mathbb{R}$.  Note that $s$ is the rank of $\bD$ so that the discriminant directions $\be_1,\ldots,\be_s$ all contain information regarding differences in the subpopulations.  When considering \eqref{power}, the terms proportional to these directions are not necessarily indicative of a problem since, if the $j$th direction is perturbed towards these other $\be_r$'s, then a direction with information regarding the separation is still found.  This then leads us to our preferred IF which takes into account the span of the discriminants. 

\subsection{Influence functions for spans of discriminants}\label{section:IFs_for_span}

Consider the population discriminant directions $\be_1,\ldots,\be_s$.  Then, for an arbitrary $\bx$ sampled from the population, the linear discriminants are given as $\be_1^\top\bx,\ldots,\be_s^\top\bx$.  From an influence perspective it is important to note the following:
\begin{description}
\item[(i)] When $s>1$, we should not focus our attention on an individual $\be_j$, but rather all $\be_j$'s collectively since any influence on a single direction may be nullified by a change in the others (e.g. an extreme example is when contamination simply results in two directions switching place in the order determined by the eigenvalues).
\item[(ii)] It also seems practical to take into account the correlation amongst the variables in $\bx$ since large changes in a component of an $\be_j$ may not be large at all given the correlation structure (e.g. an extreme example of this is when two variables are highly correlated; here a big change to the corresponding component for one of these variables in $\be_j$ may be nullified by a change in the corresponding component of another).
\end{description}

These points motivated the development of influence functions that account for correlation amongst variables by \cite{PR&SM10} for dimension reduction methods such as SIR and \cite{PR&LI11} for principal component analysis.  Importantly, given that the LDA directions are scalar proportional to the SIR directions, it suffices to adapt the measure by \cite{PR&SM10} to SIR for a discrete response and then to express the IF in terms of the LDA eigenvalues and directions.

  Following \cite{PR&SM10}, we start by considering the empirical setting where $(\bx_1,y_1),\ldots,(\bx_n,y_n)$ denotes the total of $n$ observations where the $y_i$'s specify the subpopulation to which the observation belongs.  Let $\mathbf{X}_n$ denote the $n\times p$ matrix whose $i$th row is equal to $\bx_i^\top$ and let $\widehat{\mathbf{E}}=[\widehat\be_1,\ldots,\widehat\be_s]$ be the matrix whose columns are the LDA discriminant directions. Let $\mathbf{E}$ denote the matrix of true directions estimated by $\widehat{\mathbf{E}}$.  It is not necessary for each $\widehat\be_j$ to be mapped directly as an estimate to the $j$th column of $\mathbf{E}$ since, for e.g., if the column span of $\widehat{\mathbf{E}}$ is equal to the column span of $\mathbf{E}$, then LDA has succeeded. We are also interested in how $\mathbf{X}_n\widehat{\mathbf{E}}$ (whose $j$ columns consist of the linear sample discriminants $\widehat\be_j^\top\bx_1,\ldots,\widehat\be_j^\top\bx_n$) differs from $\mathbf{X}_n\mathbf{E}$.  As was considered by \cite{LI91} in the context of SIR, this can be accomplished by considering the squared correlations between $\mathbf{X}_n\widehat{\mathbf{E}}$ and $\mathbf{X}_n\mathbf{E}$, denoted $r_1,\ldots,r_s$, and the average of these squared correlations.  That is,
  \begin{equation}
      \overline{r^2}=\frac{1}{s}\sum^s_{j=1}r_j^2=\frac{1}{s}\text{trace}\left[\widehat{\mathbf{E}}^\top\widehat\bSigma\mathbf{E}\left(\mathbf{E}^\top\widehat\bSigma\mathbf{E}\right)^{-1}\mathbf{E}^\top\widehat\bSigma\widehat{\mathbf{E}}\left(\widehat{\mathbf{E}}^\top\widehat\bSigma\widehat{\mathbf{E}}\right)^{-1}\right].\label{r2}
  \end{equation}

Let $R(.,.)$ denote the functional for the estimator in \eqref{r2} and $E$ the functional associated with the $\widehat{\mathbf{E}}$ estimator.  Then, adapting \eqref{r2} above, we consider $R[E(F),E(F_\epsilon)]=s^{-1}\text{trace}\left[E(F_\epsilon)^\top\bSigma{E(F)}\left({E(F)}^\top\bSigma{E(F)}\right)^{-1}{E(F)}^\top\bSigma{{E(F_\epsilon)}}\left({{E(F_\epsilon)}}^\top\bSigma{{E(F_\epsilon)}}\right)^{-1}\right]$.

Let $\text{MD}_0=\sqrt{(\bx_0-\bmu)^\top\bm{\Sigma}^{-1}(\bx_0-\bmu)}$ be the Mahalanobis distance of $\bx_0$ from the global mean $\bmu$ scaled according to the global covariance matrix $\bSigma$.

\begin{theorem}\label{thereom:IFrho}
When $y_0=k$ such that $\bx_0$ is a contaminant in the $k$th subpopulation, the IF for $\rho$ is equal to
$$\IF(\rho,F;\bx_0)=\frac{1}{s}\left(\text{MD}_0^2-\sum^s_{j=1}\frac{z_j^2}{1+\lambda_j}\right)\sum^s_{j=1}\frac{\left[z_j - (1+\lambda_j)\overline z_{jk}\right]^2}{\lambda_j^2(1+\lambda_j)}$$
where $\text{MD}_0$ is defined above, and $z_j=\be_j^\top(\bx_0-\bmu)$ and $\overline{z}_{jk}=\be_j^\top(\bx_0-\bmu_k)$.
\end{theorem}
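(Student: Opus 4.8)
The plan is to obtain $\IF(\rho,F;\bx_0)$ from a second-order expansion of the span functional $R[E(F),E(F_\epsilon)]$ in $\epsilon$, using the LDA direction influence functions of Theorem \ref{theorem:IFs} as the only external input. I would begin by fixing the geometry at $\epsilon=0$. Writing $\mathbf{A}=E(F)=[\be_1,\ldots,\be_s]$ and using the law of total covariance $\bSigma=E[\Cov(\bX|Y)]+\Cov[E(\bX|Y)]$ together with the generalized-eigenvector relation $\Cov[E(\bX|Y)]\be_j=\lambda_j E[\Cov(\bX|Y)]\be_j$ and the usual scaling, one has $\be_i^\top\bSigma\be_j=(1+\lambda_j)\delta_{ij}$, so that $\mathbf{A}^\top\bSigma\mathbf{A}=\mathrm{diag}(1+\lambda_1,\ldots,1+\lambda_s)$ and hence $R[E(F),E(F)]=s^{-1}\mathrm{trace}(\mathbf{I}_s)=1$.

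The key structural observation is that $R$ attains its maximal value $1$ exactly when the column span of $E(F_\epsilon)$ equals that of $\mathbf{A}$, and that $R$ is invariant under $E(F_\epsilon)\mapsto E(F_\epsilon)\mathbf{T}$ for invertible $\mathbf{T}$. Differentiating once and evaluating at $\epsilon=0$ then gives $\frac{d}{d\epsilon}R\big|_0=0$: with $\mathbf{A}^\top\bSigma\mathbf{A}$ diagonal, the derivative of the numerator trace is exactly cancelled by the derivative coming from $(E(F_\epsilon)^\top\bSigma E(F_\epsilon))^{-1}$. Consequently the leading influence of $\bx_0$ is second order, and -- following \cite{PR&SM10} -- it is this second-order term, the $\epsilon^2$ coefficient of $1-R[E(F),E(F_\epsilon)]$, that is reported as $\IF(\rho,F;\bx_0)$. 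The vanishing first-order term is itself meaningful: to first order the discriminant subspace is unchanged by contamination, and only its second-order bending contributes.

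Next I would show that this second-order term depends on the perturbations $\IF(e_j,F;\bx_0)$ only through their $\bSigma$-orthogonal-to-span components. Since in-span movement leaves the span (and hence $R$) unaltered, I can absorb it by the reparametrisation above and conclude that, to order $\epsilon^2$, only the out-of-span part $\mathbf{G}$ of $\frac{d}{d\epsilon}E(F_\epsilon)\big|_0$ enters. Using the form $\IF(e_j,F;\bx_0)=c_1\be_j+\sum_{r\neq j}c_{rj}\be_r+c_2^{(j)}\bSigma^{-1}(\bx_0-\bmu)$ from Theorem \ref{theorem:IFs}, every column's out-of-span part is a scalar multiple of the single vector $\big(\bSigma^{-1}(\bx_0-\bmu)\big)_\perp$, so $\mathbf{G}$ is rank one, $\mathbf{G}=\big(\bSigma^{-1}(\bx_0-\bmu)\big)_\perp\,\mathbf{c}^\top$ with $\mathbf{c}=(c_2^{(1)},\ldots,c_2^{(s)})^\top$. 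The relevant quadratic form is then $\mathrm{trace}(\mathbf{G}^\top\bSigma\mathbf{G}\,\Lambda^{-1})=\big\|\big(\bSigma^{-1}(\bx_0-\bmu)\big)_\perp\big\|_\bSigma^2\,\mathbf{c}^\top\Lambda^{-1}\mathbf{c}$, where $\Lambda=\mathbf{A}^\top\bSigma\mathbf{A}$. Because $\Lambda$ is diagonal this decouples into a pure sum of squares with no cross terms, which is precisely why the theorem's second factor is $\sum_j(\cdot)^2$.

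It remains to identify the two factors. Projecting $\bSigma^{-1}(\bx_0-\bmu)$ onto the $\bSigma$-orthogonal $\be_j$ gives $\big\|\big(\bSigma^{-1}(\bx_0-\bmu)\big)_\perp\big\|_\bSigma^2=\text{MD}_0^2-\sum_{j=1}^s z_j^2/(1+\lambda_j)$, the first factor. For the second, each summand is $(c_2^{(j)})^2/(1+\lambda_j)$, and reading the coefficient $c_2^{(j)}=[z_j-(1+\lambda_j)\overline z_{jk}]/\lambda_j$ off the direction influence function yields $[z_j-(1+\lambda_j)\overline z_{jk}]^2/[\lambda_j^2(1+\lambda_j)]$, after which the stated formula follows on dividing by $s$. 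Equivalently, and matching the paper's stated route, one may take the SIR $\rho$-influence function of \cite{PR&SM10} specialised to a discrete response (the slices being the groups, with slice mean $\bmu_k$), whose summand coefficient is $[(1-\alpha_j)w_j-\overline w_{jk}]/\alpha_j$, and substitute $\alpha_j=\lambda_j/(1+\lambda_j)$, $w_j=z_j/\sqrt{1+\lambda_j}$, $\overline w_{jk}=\overline z_{jk}/\sqrt{1+\lambda_j}$, $\bv_j=\be_j/\sqrt{1+\lambda_j}$. I expect the main obstacle to be bookkeeping rather than conceptual: rigorously confirming both that the first-order term and all in-span second-order contributions vanish, and that the rank-one/diagonal structure removes the cross terms, and then carrying the substitutions through $1-\alpha_j=(1+\lambda_j)^{-1}$ and $\lambda_j=\alpha_j/(1-\alpha_j)$ without error so as to land exactly on the $\lambda_j^2(1+\lambda_j)$ denominators.
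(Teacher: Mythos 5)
Your proposal is correct and follows essentially the same route as the paper: the paper proves Theorem~\ref{thereom:IFrho} by substituting $\alpha_j=\lambda_j/(1+\lambda_j)$, $w_j=z_j/\sqrt{1+\lambda_j}$ and $\overline w_{jk}=\overline z_{jk}/\sqrt{1+\lambda_j}$ into Lemma~\ref{lemma:IFrho}, which in turn rests on the projection identity $\IF(\rho;F)=s^{-1}\sum_j\bigl\|(\mathbf{I}-\mathbf{P})\bSigma^{1/2}\IF(v_j;F)\bigr\|^2$ of \cite{PR&SM10} and the fact that only the $\bSigma^{-1}(\bx_0-\bmu)$ component of $\IF(v_j;F)$ survives the projection --- exactly the mechanism you identify, and your closing paragraph is the paper's argument verbatim. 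The only difference is that you additionally sketch a self-contained second-order expansion of $R[E(F),E(F_\epsilon)]$ (vanishing first-order term, rank-one out-of-span perturbation, diagonal $\mathbf{A}^\top\bSigma\mathbf{A}$ removing cross terms) to justify the projection formula that the paper simply cites, and your coefficient bookkeeping lands on the stated $\lambda_j^2(1+\lambda_j)$ denominators correctly.
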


Theorem \ref{thereom:IFrho} offers some interesting insights into the types of observations that can and cannot influence LDA results, as well as why this influence measure has an advantage over using the influence function for the directions themselves.  Consider the following cases.

\subsubsection*{Case 1: Contaminant equal to the mean}  For $\bx_0=\bmu$, $\IF(\rho,F;\bx_0)=0.$  This is an interesting case since $\bmu$ is the overall mean.  Hence an $\bx_0=\bmu$ may be not at all like the average $\bx$ from each of the subpopulations. This does not mean that there is zero influence on the individual directions, however.    From Theorem \ref{theorem:IFs} we have that
$$\IF(e_j,F;\bx_0) = \frac{1}{2}\be_j-\sum^s_{r=1,r\neq j}\frac{(1+\lambda_j)}{(\lambda_j-\lambda_r)}\overline{z}_{rk}\overline{z}_{jk}\be_r$$
which is an element of Span$(\be_1,\ldots,\be_s)$.  Hence, while the influence function for the $j$th direction is not zero, the approximate change due to $\bx_0$ is not harmful since the resulting direction is capturing the information that separates the groups.  This is the reason why, as noted above, $\IF(\rho,F;\bx_0)=0.$

\subsubsection*{Case 2: $\bx_0-\bmu$ orthogonal to the $\be_j$s} From Theorem \ref{theorem:IFs} we have that $\IF(e_j,F;\bx_0)=\be_j/2$ which is bounded.  Influence is limited even for large outliers when $\bx_0-\bmu$ is orthogonal to the discriminant directions.  Additionally, the direction remains the same so that the limited influence is not harmful.  This is also indicated by the fact that $\IF(\rho,F;\bx_0)=0$.

\subsubsection*{Case 3: $\bx_0-\bmu \in \text{Span}(\be_1,\ldots,\be_s)$}  While the $\bx_0$s above have no influence, this is not the case for $\bx_0-\bmu$ when it is in the same direction as an $\be_j$, or when it is an element of the span of the $\be_j$s.  In both cases the IFs are unbounded suggesting that outliers of this type can be harmful to the LDA estimators.

\section{Wine data example}
 
 We now consider an example of LDA applied to the wine data set obtained from
 \begin{center}
     \url{http://archive.ics.uci.edu/ml/machine-learning-databases/wine/wine.data}
 \end{center}
 The data consists of 13 predictor variables that resulted from the chemical analyses of Italian-grown wine from $g=3$ cultivars.
 
\begin{figure}[h!t]
\centering
\includegraphics[scale=0.9]{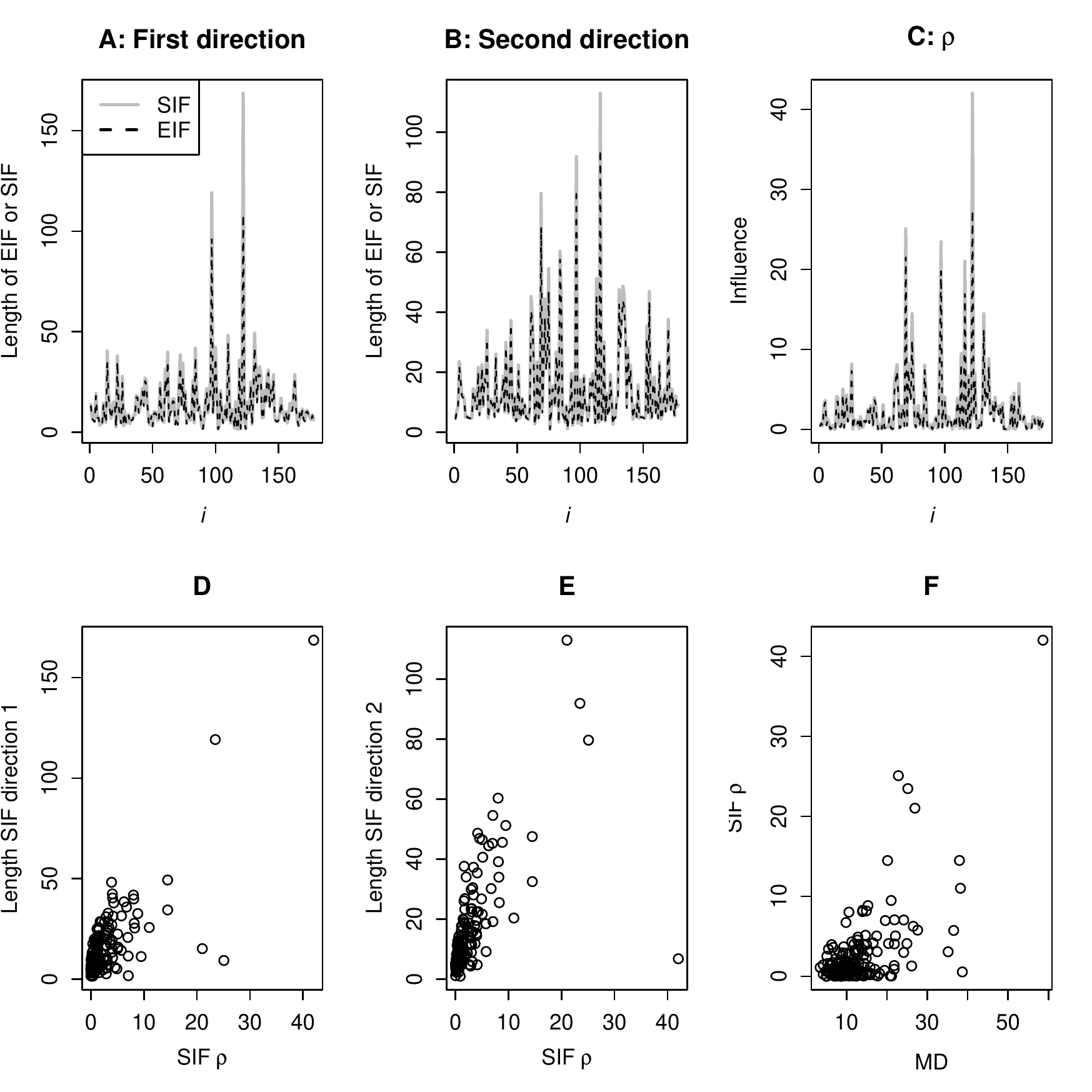}
\caption{Comparisons between the sample influence function (SIF) and empirical influence function (EIF) values - Plots A and B: squared lengths of influence function vectors for the first and second directions; Plot C: influence functions for $\rho$), the SIF for each direction and influence for $\rho$ (Plots D and E) for the wine data set and comparison between the SIF for the directions and the IF for $\rho$ and a comparison of the Mahalanobis distances for the predictor vectors and the SIF (Plot F).}\label{figure:WineIF}
\end{figure}

In Plots A and B of Figure \ref{figure:WineIF} we compare the SIF and EIF from Theorem \ref{theorem:IFs} of the first and second direction estimators for the wine data.  We have used the length of the influence vector for each observation as the comparison.  As can be seen from the plots, the EIF provides a close approximation to the SIF values.  In Plot C we compare the SIF and EIF for the IF in Theorem \ref{thereom:IFrho} for the $\rho$ measure.  Here, the SIF is equal to $(n - 1)(1 - \overline{r^2_i})$ for each $i=1,\ldots, n$ where $\overline{r^2_i}$ is the average squared canonical correlations comparing the two directions based on estimation from the complete data set and the directions with the $i$th observation removed.  Again, the EIF provides a very good approximation to the SIF.  In Plots D and E we compare the length of the direction of the SIF vectors and the $\rho$ SIF.  There is one comparatively highly influential observation that exerts a strong influence on the first direction, but not the second.  Finally, we plot the Mahalanobis distances (MDs) and the SIF for $\rho$.  While the largest outlier, as indicated by the MD, is also the most influential observation, the second largest outlier exerts little influence.  On the other hand, the second most influential observation had  the 15th largest MD.  Hence, a consideration of outliers may or may not be useful in detecting observations harmful in estimation.

\begin{figure}[h!t]
\centering
\includegraphics[scale=0.9]{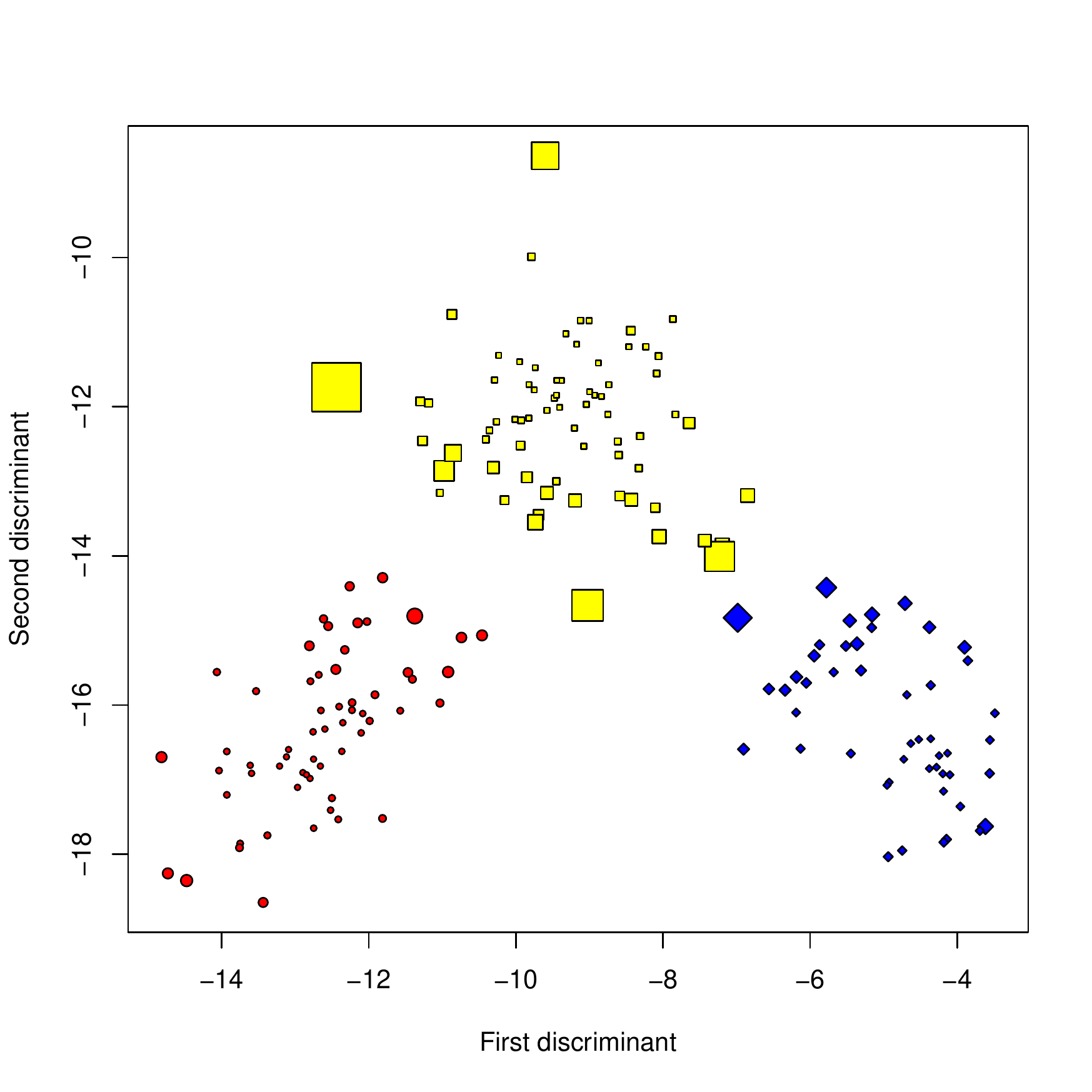}
\caption{A biplot of the linear discriminants where color identifies the three cultivars.  The size of the point is proportional to the size of the SIF for the $\rho$ measure.}\label{figure:WineIF2}
\end{figure}

In Figure \ref{figure:WineIF2} we provide a biplot of the discriminants for the wine data where color identifies the three different cultivars.  The size of the points is proportional to the size of the SIF for the $\rho$ measure.  While influential observations are usually seen on the outskirts of the their respective subpopulation groupings, others, even nearby influential observations, are not necessarily influential.  Again, and as with the case of the outlier considerations, the influence diagnostics could be employed in practice to find potential problematic observations.

\section{Discussion}

In this paper we have obtained influence functions for LDA that arise from the link between LDA and SIR, and then the dimension reduction influence functions for methods such as SIR that have been published in the literature.  We provided influence functions for individual discriminant directions as well as an overall measure that considers all of the directions.  The latter is useful since an influential observation may not change the discriminant directions themselves in which case it is not harmful to the analysis.  The influence functions can be used to explore robustness properties of LDA estimators as well as to create influence diagnostics that can be used in practice.  We applied the sample-based influence functions to a real data set and highlighted that outliers are not necessarily harmful and that, conversely, influential observations need not be outliers.    

\appendix

\section*{Appendix}

The influence functions within this paper have been derived via existing results presented in the literature for SIR.  This appendix will therefore begin with a brief overview of SIR and more details on the link between SIR and LDA.  Technical details for influence functions will follow which include influence functions for SIR in the LDA setting of a categorical response. 

\section{A brief overview of Sliced Inverse Regression}

\cite{LI91} considered the model
\begin{equation}
Y = f(\bm{\beta}_1^\top\mathbf{x},\ldots, \bm{\beta}_K^\top\mathbf{x},\varepsilon)\label{SIR_model}
\end{equation}
where $Y\in \mathbb{R}$ is a random univariate response, $\mathbf{x}\in \mathbb{R}^p$ is a random p-dimensional vector or predictor variables, $\varepsilon\in \mathbb{R}$ is a random error term independent of $\mathbf{x}$ and with $E(\varepsilon)=0$, and $f$ is an unknown link function.  Sample realizations of $y$ and $\mathbf{x}$ are available, denoted $\{y_i,\mathbf{x}\}^n_{i=1}$ which may be used to seek regression information regarding $E(Y|\mathbf{x})$ with the form of the link function $f$ of direct interest.  When $p > 2$, visualization of the $y_i$'s versus the $\mathbf{x}_i$'s is difficult. However if $K < p$ then the $\mathbf{x}_i$'s could be replaced with the $\bm{\beta}_k^\top\mathbf{x}_i$'s without loss of information and resulting in an easier visualization task due to the reduction in dimension of the predictors from $p$ to $K$.

Let $\mB = [\bm{\beta}_1,\ldots,\bm{\beta}_K]$ and consider the following condition:
\begin{condition}\label{condition:LDC} $E(\mathbf{x}|\mB^\top\mathbf{x})$ is linear in
$\mB^\top\mathbf{x}$.
\end{condition} \noindent For $E(\mathbf{x})=\bm{\mu}$, $\Cov(\mathbf{x})=\bm{\Sigma}$ and $\mS$ denoting the space spanned by the columns of $\mB$.  When Condition \ref{condition:LDC} holds, \cite{LI91} showed that for $E(\mathbf{x}|Y)$ denoting the \textit{inverse regression curve}, $\bm{\Sigma}^{-1}\left[E(\mathbf{x}|Y)-\bm{\mu}\right]\in \mS$ so that the inverse regression curve contains information regarding the span of the $\bm{\beta}_k$'s.  Given that $E(\mathbf{x}|Y)$, Li also showed that $\bm{\Sigma}^{-1}\left[\bm{\mu}_S-\bm{\mu}\right]\in \mS$ where $\bm{\mu}_S=E(\mathbf{x}|Y\in S)$ is known as a slice mean and where $S$ is a sub-interval of $Y$.

The SIR matrix is then $\mathbf{V}=\bm{\Sigma}^{-1/2}\sum^H_{h=1}p_h(\bm{\mu}_h-\bm{\mu})(\bm{\mu}_h-\bm{\mu})^\top\bm{\Sigma}^{-1/2}$ where $\bm{\mu}_h=E(\bm{x}|Y\in S_h)$ with $\bigcup^H_{h=1}S_h=\text{range}(Y)$ and $p_h=P(Y\in S_h)$.  Under Condition \ref{condition:LDC}, $\mathbf{V}$ has at most $K$ non-zero eigenvalues whose corresponding eigenvectors are elements of $\bm{\Sigma}^{1/2}\mS$.

In practice it is simple to estimate $\mathbf{V}$.  Commonly, one settles on a number of slices $H$ and partitions the $\mathbf{x}_i$'s into $H$ approximately equal sample size slices based on the order of the $y_i$'s.  The estimated $\bm{\mu}_h$'s are then the sample means of the $\mathbf{x}_i$'s in the $h$th slice.  Eigenvectors corresponding to the $K$ largest eigenvalues of the estimated $\mathbf{V}$ form an estimated basis for $\bm{\Sigma}^{1/2}\mS$ which can be re-scaled to form an estimated basis for $\mS$.  There exist several methods for choosing $H$ and deciding on a suitable $K$.  For example, see \cite{LI&SA12} who simultaneously do both. 

\section{The link between LDA and SIR}\label{sect:LDA_and_SIR}

Starting with \cite{KE91}, several authors have pointed out links between SIR and LDA.  \cite{CO&YI01} provided a formal study of SIR in the setting of the categorical response with some discussion on the links with LDA, and others have applied SIR for classification in areas such as DNA microarray analysis \citep[e.g,][]{BU&PF03,DA&LI&RO06}.  Let $\widehat{\lambda}_j$ denote the $j$th LDA eigenvalue estimate and $\widehat{\alpha}_j$ denote the respective SIR estimate.  \cite{LI00} provided a formal link between the eigenvalue estimates equal to
\begin{equation}
\alphahat_j = \frac{\lambdahat_j}{1+ \lambdahat_j}\;\;\text{and}\;\;\lambdahat_j=\frac{\alphahat_j}{1 - \alphahat_j}\;\;\;(j=1,\ldots,s).\label{eigenvalue_link}
\end{equation}

With $\widehat{\be}_j$ and $\widehat{\bv}_j$ denoting the eigenvector estimates that correspond to the eigenvalues above for LDA and SIR respectively, we can also show that
\begin{equation}
\widehat{\bv}_j = \sqrt{\left(\frac{n}{n-g}\right)\cdot\left(\frac{1}{1+\lambdahat_j}\right)} \cdot \widehat{\be}_j\;\;\text{and}\;\;\widehat{\be}_j=\sqrt{\left(\frac{n-g}{n}\right)\cdot\left(\frac{1}{1-\alphahat_j}\right)}\cdot\widehat{\bv}_j ,\;\;\;(j=1,\ldots,s).
\end{equation}

The terms involving $n$ and $g$ above arise due to the LDA directions being scaled such that $\widehat{\be}_j^\top\bS_p\widehat{\be}_j=1$ where $\bS_p=\bW/(n-g)$ is the sample pooled estimate of the common variance.  However, if one were to use $\bW/n$ as the estimate, then this term is equal to one.  At the population level, that is at $F=F_\infty$, we have that
\begin{equation}
{\bv}_j = \sqrt{\frac{1}{1+\lambda_j}} \cdot {\be}_j\;\;\text{and}\;\;{\be}_j = \sqrt{\frac{1}{1-\alpha_j}} \cdot {\bv}_j,\;\;\;(j=1,\ldots,s)\label{eigenvector_link}
\end{equation}
since $\lim_{n\rightarrow \infty} n/(n-g) = 1$.

\section{Technical derivations}

In this section we will provide several derivations of influence functions for SIR in the categorical response setting before providing the proofs for Theorems 1 and 2.  We derive the influence functions for SIR since it is simpler to derive these first before using the links in Section \ref{sect:LDA_and_SIR} to compute the LDA counterparts.

\subsection{Influence functions for SIR for a discrete response}

Throughout let $G$ be an arbitrary distribution function. Let $C$ be the functional for the covariance matrix estimator where $C(G)=\int (\bx-\bmu)(\bx-\bmu)^\top dG$, $C(F)=\bSigma$ and $C(F_n)=\widehat{\bSigma}$.  Then $\IF(C;F) = (\bx_0-\bmu)(\bx_0-\bmu)^\top-\bSigma$ \citep[see, for e.g.,][]{CR85}.

\cite{PR05} derived the influence function for the SIR matrix estimator in a slightly different context to that required here.  A continuous response variable was assumed where slices were defined by choosing slice proportions (i.e. the $p_j$'s) which in turn leads to boundaries on the domain of the response defining the slices.  Contamination then also affects these boundaries defining the slices.  For a discrete response, as we have here, the boundaries themselves do not change due to contamination, however the proportions within each group do.  Below we present the influence function for this setting which agrees with the result from \cite{PR05} with the exception that a nuisance term (arising from the moving boundaries in the continuous case) is missing.

\begin{lemma}\label{lemma:IFV}
Suppose that $y_0=k$ identifies the contaminant as being in the $k$th subpopulation.  Let $V$ denote the functional for the SIR matrix estimator such that $V(F)=\bSigma^{-1/2}\sum^g_{i=1}p_i(\bmu_i-\bmu)(\bmu_i-\bmu)^\top\bSigma^{-1/2}$.  Then
\begin{align*}
\IF(V;F)=&\IF(C^{-1/2};F)\bSigma^{1/2}V(F) + V(F)\bSigma^{1/2}\IF(C^{-1/2};F)+\bz_j\bz_j^\top - \overline{\bz}_{jk}\overline{\bz}_{jk}^\top - V(F)
\end{align*}
where $\IF(C^{-1/2};F)$ is the influence function for the inverse square root of the covariance matrix estimator whose functional is $C$, $\overline{\bz}_{jk}=\bSigma^{-1/2}(\bx_0-\bmu_{k})$ and $\bz_j=\bSigma^{-1/2}(\bx_0-\bmu)$.
\end{lemma}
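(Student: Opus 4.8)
The plan is to exploit the factorization of the SIR matrix functional as a product of three simpler functionals and then differentiate with respect to the contamination parameter. Writing $M(F)=\sum^g_{i=1}p_i(\bmu_i-\bmu)(\bmu_i-\bmu)^\top$ for the between-groups scatter functional, we have $V(F)=C^{-1/2}(F)\,M(F)\,C^{-1/2}(F)$. The influence function is the Gateaux derivative $\frac{\partial}{\partial\epsilon}V(F_\epsilon)\big|_{\epsilon=0}$ with $F_\epsilon=(1-\epsilon)F+\epsilon\Delta_{(\bx_0,k)}$, so applying the product rule gives three terms. Using $C^{-1/2}(F)=\bSigma^{-1/2}$ together with the identity $M(F)=\bSigma^{1/2}V(F)\bSigma^{1/2}$, the two outer terms collapse directly to $\IF(C^{-1/2};F)\bSigma^{1/2}V(F)$ and $V(F)\bSigma^{1/2}\IF(C^{-1/2};F)$, matching the first two terms of the claimed expression and explaining why the lemma leaves $\IF(C^{-1/2};F)$ unexpanded. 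All of the substance therefore lies in the middle term $\bSigma^{-1/2}\IF(M;F)\bSigma^{-1/2}$, so the task reduces to computing $\IF(M;F)$.

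To obtain $\IF(M;F)$ I would first record the influence functions of the three ingredients of $M$ under contamination in group $k$. The mixing proportions satisfy $p_i(\epsilon)=(1-\epsilon)p_i+\epsilon\,\mathbb{1}(i=k)$, so their derivatives at $\epsilon=0$ are $-p_i+\mathbb{1}(i=k)$; the overall mean has the familiar $\IF(\bmu;F)=\bx_0-\bmu$; and the conditional means are constant in $\epsilon$ for $i\neq k$, while for $i=k$ a short quotient-rule calculation on $\bmu_k(\epsilon)=[(1-\epsilon)p_k\bmu_k+\epsilon\bx_0]/[(1-\epsilon)p_k+\epsilon]$ yields $\IF(\bmu_k;F)=(\bx_0-\bmu_k)/p_k$. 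Differentiating $M(F_\epsilon)=\sum_i p_i(\epsilon)\bigl(\bmu_i(\epsilon)-\bmu(\epsilon)\bigr)\bigl(\bmu_i(\epsilon)-\bmu(\epsilon)\bigr)^\top$ by the product rule and substituting these derivatives then produces a proportion-term contribution $-M(F)+(\bmu_k-\bmu)(\bmu_k-\bmu)^\top$ together with cross terms arising from the mean derivatives.

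The cross terms are where the care is needed. Collecting them and using $\sum^g_{i=1}p_i(\bmu_i-\bmu)=\mathbf{0}$, the portion proportional to $\bx_0-\bmu$ annihilates, leaving only the $i=k$ contribution $(\bx_0-\bmu_k)(\bmu_k-\bmu)^\top+(\bmu_k-\bmu)(\bx_0-\bmu_k)^\top$, in which the factor $1/p_k$ cancels against the weight $p_k$. Combining with the proportion term and completing the square via $\bx_0-\bmu=(\bx_0-\bmu_k)+(\bmu_k-\bmu)$ telescopes everything to
\[
\IF(M;F)=(\bx_0-\bmu)(\bx_0-\bmu)^\top-(\bx_0-\bmu_k)(\bx_0-\bmu_k)^\top-M(F).
\]
Conjugating by $\bSigma^{-1/2}$ turns the three summands into $\bz_j\bz_j^\top$, $\overline{\bz}_{jk}\overline{\bz}_{jk}^\top$ and $V(F)$ respectively, which is exactly the middle block of the stated result.

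I expect the main obstacle to be the conditional-mean influence function $\IF(\bmu_k;F)=(\bx_0-\bmu_k)/p_k$: contamination perturbs both the within-group sum and the group probability, and keeping the quotient rule honest is what produces the $1/p_k$ weight. The second delicate point is the bookkeeping that makes the cross terms collapse --- recognising the $\sum_i p_i(\bmu_i-\bmu)=\mathbf{0}$ cancellation and then the completion-of-square identity --- since a naive expansion leaves a clutter of rank-one terms that only simplifies after these two observations are used. This direct route is also cleaner than specialising the continuous-response derivation of \cite{PR05}, automatically dropping the nuisance boundary term that is absent here.
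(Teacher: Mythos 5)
Your proposal is correct and follows essentially the same route as the paper: the paper's proof also contaminates the constituent functionals (writing $\mT_i(F_\epsilon)=\bmu_i+\tfrac{\epsilon}{p_i(\epsilon)}I(y_0=i)(\bx_0-\bmu_i)$ and $p_i(\epsilon)=(1-\epsilon)p_i+I(y_0=i)\epsilon$, exactly your ingredients) and then differentiates the product $C^{-1/2}(F_\epsilon)\sum_i p_i(\epsilon)[\cdots][\cdots]^\top C^{-1/2}(F_\epsilon)$ at $\epsilon=0$, which the authors leave as ``derive and rearrange.'' Your write-up simply carries out that differentiation explicitly --- the $\sum_i p_i(\bmu_i-\bmu)=\mathbf{0}$ cancellation and the completion of the square via $\bx_0-\bmu=(\bx_0-\bmu_k)+(\bmu_k-\bmu)$ are exactly the omitted bookkeeping --- and your intermediate result $\IF(M;F)$ checks out.
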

\begin{proof}
Let $\mT$ be the functional for the sample mean estimator where $\mT(G)=\int \bx dG=\bmu$.  Similarly, let $\mT_i$ be the functional for the sample mean estimator associated with the $i$th group.  Firstly, we have that $\mT(F_\epsilon)=\int \bx d[(1-\epsilon)F+\epsilon\Delta_{\bx_0}]=(1-\epsilon)\bmu + \epsilon\bx_0$ so that the influence function for the sample mean estimator is $\IF(\mT;F)=\bx_0-\bmu$.  For the mean of the $i$th group, the estimator is only influenced if $y_0=i$ so that
\begin{equation}
\mT_i(F_\epsilon)=\bmu_i+\frac{\epsilon}{p_i(\epsilon)}I(y_0=i)(\bx_0-\bmu_i),\nonumber
\end{equation}
where $I(y_0=i)$ equals one if $y_0=i$ and zero otherwise, and where $p_i(\epsilon)=(1-\epsilon)p_i+I(y_0=i)\epsilon$ is the proportion in the $i$th group following contamination.  Consequently we have
$$V(F_\epsilon)=C^{-1/2}(F_\epsilon)\sum^g_{i=1}p_i(\epsilon)\left[\mT_i(F_\epsilon)-\mT(F_\epsilon)\right]\left[\mT_i(F_\epsilon)-\mT(F_\epsilon)\right]^\top C^{-1/2}(F_\epsilon)$$
and the resulting influence function follows simply by deriving $\left[\partial V(F_\epsilon)/(\partial\epsilon)\right]\big|_{\epsilon = 0}$ and rearranging.
\end{proof}

In order to derive the influence function for the LDA directions, we also need the influence function for the SIR eigenvalue estimators.  Using previous results, it is relatively simple to compute these (compared to the LDA eigenvalue estimators) due to the SIR matrix being symmetric.  We present the influence function for the $j$th SIR eigenvalue estimator in the below lemma.

\begin{lemma}\label{lemma:IFalpha}
Again suppose that $y_0=k$.  Let $a_j$ denote the functional for the $j$th SIR eigenvalue estimator such that $a_j(F)=\alpha_j$ and where $j \leq s$.  Then
$$\IF(a_j;F)=(1 -\alpha_j)w_j^2 - \overline{w}_{jk}^2$$
where $w_j = \mathbf{v}_j^\top(\bx_0-\bmu)$ and $\overline{w}_{jk}=\bv_j^\top(\bx_0-\bmu_{k})$.
\end{lemma}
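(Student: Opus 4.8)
The plan is to exploit the symmetry of $V(F)$ and invoke the classical eigenvalue perturbation identity (the same device used for principal components by \cite{CR85}): for a simple eigenvalue $\alpha_j$ of a symmetric matrix functional with associated unit eigenvector $\mathbf{u}_j$ (so that $V(F)\mathbf{u}_j=\alpha_j\mathbf{u}_j$ and $\mathbf{u}_j^\top\mathbf{u}_j=1$), the influence function of the eigenvalue is the quadratic form
\[
\IF(a_j;F)=\mathbf{u}_j^\top\IF(V;F)\mathbf{u}_j .
\]
The crucial bookkeeping is the normalisation: the SIR direction appearing in the statement is $\bv_j=\bSigma^{-1/2}\mathbf{u}_j$ (equivalently $\mathbf{u}_j=\bSigma^{1/2}\bv_j$), which satisfies $\bv_j^\top\bSigma\bv_j=1$ and is the eigenvector of $\bSigma^{-1}\left[\sum_i p_i(\bmu_i-\bmu)(\bmu_i-\bmu)^\top\right]$ with eigenvalue $\alpha_j$. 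Under this identification $\mathbf{u}_j^\top\bSigma^{-1/2}=\bv_j^\top$, which is exactly what converts the $\bSigma^{-1/2}$-scaled quantities of Lemma \ref{lemma:IFV} into the scalars $w_j$ and $\overline{w}_{jk}$.

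First I would substitute the expression for $\IF(V;F)$ from Lemma \ref{lemma:IFV} into the quadratic form and dispose of the terms that are immediate. Using $\mathbf{u}_j^\top\bz_j=\mathbf{u}_j^\top\bSigma^{-1/2}(\bx_0-\bmu)=\bv_j^\top(\bx_0-\bmu)=w_j$ and similarly $\mathbf{u}_j^\top\overline{\bz}_{jk}=\overline{w}_{jk}$, the two rank-one terms contribute $w_j^2$ and $-\overline{w}_{jk}^2$, while the term $-V(F)$ contributes $-\mathbf{u}_j^\top V(F)\mathbf{u}_j=-\alpha_j$. The two remaining terms, $\IF(C^{-1/2};F)\bSigma^{1/2}V(F)$ and its transpose $V(F)\bSigma^{1/2}\IF(C^{-1/2};F)$, each produce a factor $\alpha_j$ upon using $V(F)\mathbf{u}_j=\alpha_j\mathbf{u}_j$; since $\IF(C^{-1/2};F)$ and $\bSigma^{1/2}$ are symmetric the two resulting scalars coincide, so together they give $2\alpha_j\,\mathbf{u}_j^\top\bSigma^{1/2}\IF(C^{-1/2};F)\mathbf{u}_j$.

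The main obstacle is evaluating this last cross term without computing $\IF(C^{-1/2};F)$ explicitly, which would otherwise require solving a Sylvester equation. The trick I would use is to differentiate the identity $C^{-1/2}(F_\epsilon)C(F_\epsilon)C^{-1/2}(F_\epsilon)=\mathbf{I}$ at $\epsilon=0$, giving the Lyapunov-type relation
\[
\IF(C^{-1/2};F)\bSigma^{1/2}+\bSigma^{-1/2}\IF(C;F)\bSigma^{-1/2}+\bSigma^{1/2}\IF(C^{-1/2};F)=\mathbf{0}.
\]
Sandwiching this between $\mathbf{u}_j^\top$ and $\mathbf{u}_j$, the first and third terms are equal by symmetry, so $2\,\mathbf{u}_j^\top\bSigma^{1/2}\IF(C^{-1/2};F)\mathbf{u}_j=-\mathbf{u}_j^\top\bSigma^{-1/2}\IF(C;F)\bSigma^{-1/2}\mathbf{u}_j$. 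Substituting $\IF(C;F)=(\bx_0-\bmu)(\bx_0-\bmu)^\top-\bSigma$ and again using $\mathbf{u}_j^\top\bSigma^{-1/2}=\bv_j^\top$ together with $\bSigma^{-1/2}\bSigma\bSigma^{-1/2}=\mathbf{I}$ and $\mathbf{u}_j^\top\mathbf{u}_j=1$ collapses the right-hand side to $-(w_j^2-1)$, whence $\mathbf{u}_j^\top\bSigma^{1/2}\IF(C^{-1/2};F)\mathbf{u}_j=(1-w_j^2)/2$.

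Finally I would assemble the pieces: the cross term equals $2\alpha_j\cdot(1-w_j^2)/2=\alpha_j(1-w_j^2)$, so
\[
\IF(a_j;F)=w_j^2-\overline{w}_{jk}^2-\alpha_j+\alpha_j(1-w_j^2)=(1-\alpha_j)w_j^2-\overline{w}_{jk}^2 ,
\]
as claimed. The only point requiring care beyond the cross term is that the eigenvalue perturbation identity presumes $\alpha_j$ to be a simple eigenvalue; for $j\le s$ with distinct nonzero eigenvalues this holds, and the derivation is otherwise a matter of the symmetry-based simplifications described above.
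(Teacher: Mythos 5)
Your proof is correct and follows essentially the same route as the paper: both reduce $\IF(a_j;F)$ to the quadratic form $\eta_j^\top\IF(V;F)\eta_j$ in the unit eigenvector of the symmetric SIR matrix (the paper gets there via the chain rule applied to $\eta_j^\top V\eta_j=a_j$ together with $\eta_j^\top\IF(\eta_j;F)=0$ cited from Prendergast (2005), while you invoke the standard symmetric eigenvalue perturbation identity directly), and both then substitute Lemma \ref{lemma:IFV} and simplify using $\bSigma^{-1/2}\eta_j=\bv_j$. The only substantive difference is your treatment of the $\IF(C^{-1/2};F)$ cross term: where the paper defers to derivations following Equation (19) of Prendergast (2005) to convert it into $-\alpha_j\bv_j^\top\IF(C;F)\bv_j$, you obtain the same scalar self-containedly by differentiating $C^{-1/2}(F_\epsilon)C(F_\epsilon)C^{-1/2}(F_\epsilon)=\mathbf{I}$ and exploiting symmetry of the resulting Lyapunov relation — a cleaner, citation-free version of the identical computation, and your final assembly $(1-\alpha_j)w_j^2-\overline{w}_{jk}^2$ checks out.
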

\begin{proof}
  Let $\eta_j$ be the functional for the eigenvector of the SIR matrix corresponding to the $j$th ordered eigenvalue (which is $a_j(F)=\alpha_j$ at $F$).  Then, using the Chain Rule and the identity $[\eta_j(G)]^\top V(G) \eta_j(G)=a_j(G)$, we have
$$\IF(a_j;F) = 2\alpha_j [\eta_j(F)]^\top \IF(\eta_j;F) + [\eta_j(F)]^\top \IF(V;F)  \eta_j(F).$$  From Equation (13) of \cite{PR05}, $[\eta_j(F)]^\top \IF(\eta_j;F)=0$ so that the influence function reduces to $\IF(a_j;F) = [\eta_j(F)]^\top \IF(V;F) \eta_j(F)$ where $\IF(V;F)$ is provided in Lemma \ref{lemma:IFV}.  We therefore have
\begin{align}
\IF(a_j;F) =& 2\alpha_j[\eta_j(F)]^\top\IF(C^{-1/2};F)\bSigma^{1/2}\eta_j(F)+w_j^2 -\overline{w}_{jk}^2-\alpha_j.
\end{align}

Using derivations following Equation (19) of \cite{PR05}, it can be shown that $2\alpha_j[\eta_j(F)]^\top\IF(C^{-1/2};F)\bSigma^{1/2}\eta_j(F)=\alpha_j[\eta_j(F)]^\top\bSigma^{1/2}\IF(C^{-1};F)\bSigma^{1/2}\eta_j(F)$ where $\IF(C^{-1};F)=-\bSigma^{-1}\IF(C;F)\bSigma^{-1}$ for the $\IF(C;F)$ provided at the start of this section.  Application of this result, and recalling $\bSigma^{-1/2}\eta_j(F)=\bv_j$ and $\|\eta_j(F)\|=1$, completes the proof.
\end{proof}

Recall from Lemma \ref{lemma:IFV} that this influence function for SIR with a discrete response is the same as that for the continuous case in \cite{PR05} except for a nuisance term that is missing.  Consequently, we use the influence function results for the SIR directions for a continuous response and conveniently drop the nuisance term.  Let $v_j$ denote the functional for the $j$th SIR direction estimator.  Then, from \cite{PR05} and using our notations and with some rearranging of terms, we have
\begin{align}
\IF(v_j; F)=& \frac{1}{2}\left[w_j^2+1-2\frac{(w_j-\overline{w}_{jk})w_j}{\alpha_j}\right]\bv_j+\frac{1}{\alpha_j}\left[(1-\alpha_j)w_j-\overline{w}_{jk}\right]\bSigma^{-1}(\bx_0-\bmu)\nonumber\\
& + \sum^s_{r=1,r\neq j}\frac{1}{\alpha_j-\alpha_r}\left[\frac{\alpha_r(1-\alpha_j)}{\alpha_j}w_rw_j +  \frac{(\alpha_j - \alpha_r)}{\alpha_j}w_r\overline{w}_{jk}-\overline{w}_{rk}\overline{w}_{jk}\right]\bv_r.\label{IFvj}
\end{align}

Finally, \cite{PR&SM10} gave an influence function for a class of dimension reduction models which includes SIR.  The reasoning for this particular influence function is provided in Section \ref{section:IFs_for_span}.  Here we derive the form of the influence function for SIR which in turn is then used to obtain the influence function associated with LDA.

\begin{lemma}\label{lemma:IFrho}
Using previous notations, the influence function based on the average squared canonical correlations for SIR is equal to
$$\IF(\rho;F)=\frac{1}{s}\left(\text{MD}_0^2-\sum^s_{j=1}w_j^2\right)\cdot\sum^s_{j=1}\frac{1}{\alpha_j^2}\left[(1-\alpha_j)w_j-\overline{w}_{jk}\right]^2$$
where $\text{MD}_0=\sqrt{(\bx_0-\bmu)^\top\bm{\Sigma}^{-1}(\bx_0-\bmu)}$.
\end{lemma}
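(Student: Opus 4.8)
The plan is to specialize the general influence function of \cite{PR&SM10} for the average-squared-canonical-correlation measure to the SIR setting, substitute the SIR direction influence functions from \eqref{IFvj}, and simplify using the $\bSigma$-orthonormality of the SIR directions. The conceptual core is that $\rho$ compares the \emph{span} of the perturbed directions with the span at $F$; because the two spans coincide at $\epsilon=0$ and the measure is maximised there, its ordinary first-order sensitivity vanishes and the relevant influence function is the leading (second-order) coefficient, which records only how far each direction is rotated \emph{out} of the original span.

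First I would pass to the whitened coordinate $\bz=\bSigma^{-1/2}(\bx_0-\bmu)$, in which the measure $R$ becomes the average $\cos^2$ of the principal angles between the reference subspace $\mathcal{U}=\mathrm{span}(\bSigma^{1/2}\bv_1,\dots,\bSigma^{1/2}\bv_s)$ and its perturbation. Because the $\bv_j$ satisfy $\bv_j^\top\bSigma\bv_r=\delta_{jr}$ (they are $\bSigma^{-1/2}$ times the orthonormal eigenvectors $\eta_j$ of the symmetric matrix $V(F)$), the whitened directions $\eta_j=\bSigma^{1/2}\bv_j$ form an orthonormal basis of $\mathcal{U}$. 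Writing the perturbed whitened directions as $\eta_j+\epsilon\,\bSigma^{1/2}\IF(v_j;F)+O(\epsilon^2)$, a short principal-angle computation gives $s\bigl(1-\overline{r^2}\bigr)=\epsilon^2\sum_{j=1}^s\bigl\|Q_\perp\,\bSigma^{1/2}\IF(v_j;F)\bigr\|^2+O(\epsilon^3)$, where $Q_\perp$ is the orthogonal projection onto $\mathcal{U}^\perp$; equivalently $\IF(\rho;F)=\tfrac1s\sum_{j=1}^s\|Q_\perp\bSigma^{1/2}\IF(v_j;F)\|^2$. The delicate points here are checking that the $O(\epsilon)$ within-span corrections incurred by renormalising a basis of the perturbed subspace are annihilated by $Q_\perp$, and that the resulting quantity is a sum of squared norms (a Frobenius norm), so that no cross terms between distinct directions $j\neq r$ survive.

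With this reduction in hand the remaining work is a direct substitution. From \eqref{IFvj}, $\IF(v_j;F)$ is a linear combination of $\bv_j$, the $\bv_r$ with $r\neq j$, and $\bSigma^{-1}(\bx_0-\bmu)$. Applying $\bSigma^{1/2}$ sends $\bv_j\mapsto\eta_j$ and $\bv_r\mapsto\eta_r$, all of which lie in $\mathcal{U}$ and are therefore killed by $Q_\perp$; only the term $\tfrac{1}{\alpha_j}\bigl[(1-\alpha_j)w_j-\overline{w}_{jk}\bigr]\bSigma^{-1}(\bx_0-\bmu)$ survives, mapping to $\tfrac{1}{\alpha_j}[(1-\alpha_j)w_j-\overline{w}_{jk}]\,\bz$. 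Hence $Q_\perp\bSigma^{1/2}\IF(v_j;F)=\tfrac{1}{\alpha_j}[(1-\alpha_j)w_j-\overline{w}_{jk}]\,Q_\perp\bz$, so that $\|Q_\perp\bSigma^{1/2}\IF(v_j;F)\|^2=\tfrac{1}{\alpha_j^2}[(1-\alpha_j)w_j-\overline{w}_{jk}]^2\,\|Q_\perp\bz\|^2$.

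Finally I would evaluate $\|Q_\perp\bz\|^2$. Since $\{\eta_j\}$ is orthonormal and $\eta_j^\top\bz=\bv_j^\top(\bx_0-\bmu)=w_j$, the projection of $\bz$ onto $\mathcal{U}$ has squared length $\sum_{j=1}^s w_j^2$, whence $\|Q_\perp\bz\|^2=\|\bz\|^2-\sum_{j=1}^s w_j^2=\text{MD}_0^2-\sum_{j=1}^s w_j^2$. Summing over $j$ and dividing by $s$ then yields exactly the stated formula. The main obstacle is the second paragraph: establishing the correct second-order (rather than first-order) form of the influence function for this span-based measure and verifying that only the out-of-span component of each direction influence function contributes. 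Once that geometry is settled the surviving algebra is routine.
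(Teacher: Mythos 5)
Your proposal is correct and follows essentially the same route as the paper: reduce $\IF(\rho;F)$ to $\frac{1}{s}\sum_{j=1}^s\|(\mathbf{I}-\mathbf{P})\bSigma^{1/2}\IF(v_j;F)\|^2$, observe that the projection annihilates every term of \eqref{IFvj} lying in $\mathrm{span}(\bSigma^{1/2}\bv_1,\ldots,\bSigma^{1/2}\bv_s)$ so only the $\bSigma^{-1}(\bx_0-\bmu)$ term survives, and evaluate $\|(\mathbf{I}-\mathbf{P})\bSigma^{-1/2}(\bx_0-\bmu)\|^2=\text{MD}_0^2-\sum_j w_j^2$ via idempotency. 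The only difference is that the paper obtains the projection formula by citing Theorem 2 of \cite{PR&SM10} directly, whereas you sketch its derivation from the second-order principal-angle expansion; that step is the cited result, not new work required here.
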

\begin{proof}
From Theorem 2 of \cite{PR&SM10},
\begin{equation}
\IF(\rho;F) = \frac{1}{s}\sum^s_{j=1}\left\|(\mathbf{I}-\mathbf{P})\bm{\Sigma}^{1/2}\IF(v_j; F)\right\|^2
\end{equation}
where $\mathbf{P}=\sum^s_{j=1}\bm{\Sigma}^{1/2}\bv_j\bv_j^\top\bm{\Sigma}^{1/2}$ which is a projection matrix onto the space spanned by $\bm{\Sigma}^{1/2}\bv_1,\ldots,\bm{\Sigma}^{1/2}\bv_s$.  Consequently, $\mathbf{I}-\mathbf{P}$ is a projection matrix onto the compliment of this space so that $(\mathbf{I}-\mathbf{P})\bSigma^{1/2}\bv_j=0$ for all $j=1,\ldots,s$.  Therefore, from \eqref{IFvj}, we may write
$$\IF(\rho;F)=\frac{1}{s}\left\|(\mathbf{I}-\mathbf{P})\bSigma^{-1/2}(\bx_0-\bmu)\right\|^2\cdot\sum^s_{j=1}\frac{1}{\alpha_j^2}\left[(1-\alpha_j)w_j-\overline{w}_{jk}\right]^2.$$

The proof is complete by noting that, since $\mathbf{I}-\mathbf{P}$ is idempotent, it follows that
$$\left\|(\mathbf{I}-\mathbf{P})\bSigma^{-1/2}(\bx_0-\bmu)\right\|^2=(\bx_0-\bmu)^\top\bSigma^{-1}(\bx_0-\bmu)-\sum^s_{j=1}(\bx_0-\bmu)^\top\bv_j\bv_j^\top(\bx_0-\bmu)$$
where $(\bx_0-\bmu)^\top\bv_j=\bv_j^\top(\bx_0-\bmu)=w_j$.
\end{proof}

\subsection{Proof of Theorem 1}

From \eqref{eigenvalue_link}, we may write the $j$th LDA eigenvalue functional in terms of the SIR eigenvalue functional as $l_j(F)=a_j(F)/[1-a_j(F)]$.  Therefore, using the Chain Rule,
$$\frac{\partial}{\partial\epsilon}l_j(F_\epsilon)=\frac{a_j(F_\epsilon)}{[1-a_j(F_\epsilon)]^2}\frac{\partial}{\partial\epsilon}a_j(F_\epsilon) + \frac{1}{1-a_j(F_\epsilon)}\frac{\partial}{\partial\epsilon}a_j(F_\epsilon). $$  Hence the $\IF(l_j,F;\bx_0)=[\partial l_j(F_\epsilon)/(\partial\epsilon)]|_{\epsilon=0}=\IF(a_j,F;\bx_0)/(1-\alpha_j)^2$.  The result for the LDA eigenvalue estimator follows from Lemma \ref{lemma:IFalpha} and from noting the relationship between the LDA and SIR eigenvalues and eigenvectors from \eqref{eigenvalue_link} and \eqref{eigenvector_link}.

\subsection{Proof of Theorem 2}

Theorem 2 can be obtained from Lemma \ref{lemma:IFrho} and using $w_j=z_j/\sqrt{1+\lambda_j}$, $\overline w_{jk}=\overline z_{jk}/\sqrt{1+\lambda_j}$ and $\alpha_j=\lambda_j/(1+\lambda_j)$.

\begin{footnotesize}
\bibliography{ref}
\bibliographystyle{authordate4}
\end{footnotesize}

\end{document}